\def\df{\mathrm{d}}
\def\im{\mathrm{i}}
\def\ie{i.e.\ }
\def\eg{e.g.\ }
\def\matlab{\textsc{Matlab}}
\newcommand{\boldchr}[1]{\mbox{\boldmath $#1$}}
\newcommand{\btop}[2]{\genfrac{}{}{0pt}{3}{#1}{#2}}
\newtheorem{theorem}{Theorem}[subsection]
 \newtheorem{corollary}[theorem]{Corollary}
 \newtheorem{lemma}[theorem]{Lemma}
 \theoremstyle{definition}
 \theoremstyle{remark}
 \newtheorem{remark}[theorem]{Remark}
\begin{document}
\title[H. Majidian: MODIFIED FILON-CLENSHAW-CURTIS RULES] {Modified Filon-Clenshaw-Curtis rules for oscillatory integrals with a nonlinear oscillator}
\author{Hassan Majidian$^1$}

\thanks{$^1$Department of Multidisciplinary Studies, Faculty of Encyclopedia Studies, Institute for Humanities and Cultural Studies, PO Box 14155-6419, Tehran, Iran,
\\ \indent\,\,\,e-mail: h.majidian@ihcs.ac.ir}

\begin{abstract}
Filon-Clenshaw-Curtis (FCC) rules are among rapid and accurate quadrature rules for computing oscillatory integrals. In the implementation of the FCC rules, when the oscillator of the integral is nonlinear, its inverse should be evaluated at several points. In this paper, we suggest an approach based on interpolation, which leads to a class of modifications of the original FCC rules in such a way that the modified rules do not deal with the inverse of the oscillator function. In the absence of stationary points, two reliable and efficient algorithms based on modified FCC (MFCC) rules are introduced. For each algorithm, an error estimate is given theoretically, and then illustrated by some numerical experiments. Also, some numerical experiments are carried out in order to compare convergence speed of the two algorithms. In the presence of stationary points, an algorithm based on composite MFCC rules on graded meshes is developed. An error estimate is given theoretically, and then illustrated by some numerical experiments.

\bigskip
\noindent Keywords: Filon-Clenshaw-Curtis rule; oscillatory integral; nonlinear oscillator; stationary point; graded mesh

\bigskip \noindent AMS Subject Classification: 65D30; 65Y40
\end{abstract}
\maketitle

\smallskip
\section{Introduction}\label{sec:intro}
Consider integrals of the form
\begin{equation}\label{eq:intfg}
I^{[a,b]}_k(f,g):=\int_{a}^b f(x)\exp(\im k g(x))\,\df x,
\end{equation}
where ${f\in L^1[a,b]}$, ${k>0}$, and ${g\in C^m[a,b]}$ for some positive integer $m$. If $g$ does not oscillate rapidly in ${[a,b]}$, the integrand in~\eqref{eq:intfg} oscillates violently for larger values of $k$. This class of integrals contains a large portion of highly oscillatory integrals, appearing in many areas of science and engineering, e.g., Fourier series and transforms, special functions, high-frequency acoustic scattering, etc (see, \eg\cite{olv08,chan12} and references therein). Because of their wide applications, computing oscillatory integrals of the form~\eqref{eq:intfg} has been the subject of many researches in the last two decades.

Filon-type methods are among most efficient ones for computing~\eqref{eq:intfg} with a long history. While high accurate methods based on steepest descents (see, \eg\cite{huy06,dea09,huy12}) need some manual calculations regarding the steepest decent paths in the complex plane, the Filon-type methods does not deals with the complex calculus and can be performed automatically by computers provided that the moments can be computed to the desired accuracy. The idea, in general, is to replace the amplitude function $f$ by an interpolation polynomial $p_n$ and consider ${I^{[a,b]}_k(p_n,g)}$ as an approximation to ${I^{[a,b]}_k(f,g)}$. This idea originated from~\cite{fil28}, where Louis Napoleon George Filon (1875--1937) applied it to the Fourier integral ${\int_a^{b}f(x)\sin k x\,\df x}$.

After the work of Filon, many papers on his method appeared, but the 2005 paper~\cite{ise05} proved to be a milestone in the history of research on the Filon-type methods. In that work, the asymptotic expansion of ${I^{[a,b]}_k(f,g)}$ by negative powers of $k$ has been studied, and a generalization of the Filon's method has been developed. The paper has been the motivation of many later works.

Successful implementation of a Filon-type method rests on the ability to compute the moments ${I^{[a,b]}_k(x^m,g)}$. For the linear oscillator ${g(x)=x}$, the moments can be computed by the following identity:
\begin{equation}\label{eq:momx}
I^{[a,b]}_k(x^m,x)=\frac{1}{(-\im k)^{m+1}}\left[\Gamma(1+m,-\im k a)-\Gamma(1+m,-\im k b)\right],
\end{equation}
where $\Gamma$ is the incomplete Gamma function~\cite{abr65}. However, formula~\eqref{eq:momx} is useless mainly because interpolation by the polynomial basis ${1,x,x^2,\ldots}$ is in general unstable. For a more complex oscillator $g$, Olver~\cite{olv07} considered a $g$-dependent basis instead of the usual polynomial basis $x^j$. Then the moments can be written in closed forms, again in terms of incomplete Gamma functions. Another approach, proposed in~\cite{xia07}, is based on the transformation ${\tau=g(x)}$. The method enables one to compute the moments by the identity~\eqref{eq:momx} when $g$ has no stationary points in ${[a,b]}$; otherwise if $g$ has a single stationary point, similar identities for ${I^{[0,y]}_k(x^m,x^j)}$ are employed (cf.~\cite{abr65,ise05}).

Filon-Clenshaw-Curtis (FCC) rules, beside other advantages, enable one to compute the moments stably and rapidly without needing to deal with (incomplete) Gamma functions, whose evaluation is not trivial. The (${N+1}$)-point FCC rule can be described as follows: Consider the oscillatory integral
\begin{equation}\label{eq:intf}
I_k(f):=\int_{-1}^1 f(x)\exp(\im k x)\,\df x.
\end{equation}
In the (${N+1}$)-point FCC rule, the amplitude function $f$ is interpolated at Clenshaw-Curtis points ${t_{j,N}:=\cos(j\pi/N)}$, ${j=0,\ldots,N}$ by the polynomial
\begin{equation}
Q_Nf(s):=\sum_{n=0}^N{}''\alpha_{n,N}(f)T_n(s),
\end{equation}
where ${T_n(s)=\cos(n\arccos(s))}$ are the Chebyshev polynomials of the first kind, and $\sum{}''$ means that the first and the last terms of the sum are to be halved. By the discrete orthogonality of the cosine function, the coefficients $\alpha_{n,N}(f)$ can be written as
\begin{equation}\label{eq:alpha}
\alpha_{n,N}(f)=\frac{2}{N}\sum_{j=0}^N{}''\cos\left(jn\pi/N\right)f(t_{j,N}),\quad n=0,\ldots,N.
\end{equation}
If $f$ in~\eqref{eq:intf} is replaced by $Q_Nf$, then the (${N+1}$)-point FCC rule is obtained as
\begin{equation}\label{eq:fcc}
I_{k,N}(f)=\sum_{n=0}^N{}''\alpha_{n,N}(f)\omega_n(k),
\end{equation}
where the weights (modified moments) ${\omega_n(k)=I_k(T_n)}$, ${n\ge0}$, can be computed recursively thanks to the three-term recurrence relation for the Chebyshev polynomials $T_n$. As a generalization of the original Clenshaw-Curtis rules~\cite{cle60}, this idea has been developed gradually by employing the results of many earlier works, mainly Piessens' contributions~\cite{pie71,pie73,pie76,pie84,pie00} and Sloan's works~\cite{slo78,slo80}. By a simple affine-like
change of variables, the FCC rules~\eqref{eq:fcc} can also be applied to
\begin{equation}\label{eq:intfab}
I^{[a,b]}_k(f):=\int_{a}^b f(x)\exp(\im k x)\,\df x.
\end{equation}

The recent papers of Dom\'{\i}nguez et al. \cite{dom11,dom13} contain important developments and results on the FCC rules. In~\cite{dom11}, beside an error bound in terms of both $k$ and $N$, a stable algorithm with the complexity of ${\mathcal{O}(N\log N)}$ was proposed for computing the weights. The algorithm can be employed for computing the general integral~\eqref{eq:intfg} in the following way (cf.~\cite{dom13}): If $g$ has no stationary points in ${[a,b]}$, then we can assume that ${g'(x)>0}$, ${x\in[a,b]}$, without loss of generality. In this case, the change of variables ${\tau=g(x)}$ reduces the integral to ${I^{[g(a),g(b)]}_k(F)}$ with ${F(\tau)=\left((f/g')\circ g^{-1}\right)(\tau)}$, ${\tau\in[g(a),g(b)]}$; otherwise, if $g$ has some stationary points in ${[a,b]}$, similar change of variables reduce the integral to a finite number of integrals of the form~\eqref{eq:intfab}, such that each integral has a single singularity at one of the endpoints and can be computed efficiently by composite FCC rules on graded meshes~\cite{dom13}.

\paragraph{Statement of the problem (or the motivation to this paper)} In the implementation of the algorithm proposed in~\cite{dom13}, computing $g^{-1}$ is required at several points. Expressing $g^{-1}(x)$ in a closed form, however, is not always possible. For example, consider the function ${g(x)=x-\sin(x)}$. Thus, we should resort to approximation methods, e.g., Newton iterations. This however imposes extra error and computational cost.

In this paper, we follow the idea of~\cite{xia07} and propose reliable and efficient methods for computing~\eqref{eq:intfg}, which are based on the FCC rules but without needing to compute $g^{-1}(x)$ at any point. The modified FCC rules, as developed in this paper, play key roles when exponential transformations are applied to the oscillatory integrals in order to treat singularities (see~\cite{maj19} for details).

Authors in~\cite{ma18} have also employed the ideas in~\cite{xia07} and suggest composite Filon-type rules which do not deal with $g^{-1}$, but they do not benefit from advantages of the FCC rule. Our idea in this paper is however essentially different in that it is based on the FCC rules and exploits their fast and stable construction (cf.~\cite{dom11}) as well as their error estimates in terms of $k$, $N$, and Sobolev regularity of $f$ (cf.~\cite{dom13}). These advantages are gained by employing special interpolations such that the $g$-dependent interpolation nodes of~\cite{xia07} approximately coincide the Clenshaw-Curtis points.

The plan of this paper is as follows: In Section~\ref{sec:gen}, which contains the main ideas, we describe a class of quadrature rules for computing~\eqref{eq:intfg} in the absence of stationary points (i.e., $g(x)\ne0$, for any $x\in[a,b]$). The rules can be considered as modifications of the FCC rules~\cite{dom13} which release the necessity of computing $g^{-1}(x)$ by imposing an interpolation process. Thus, an interpolation error is added to the total error. In order to decrease the interpolation error, two efficient interpolation methods are employed, based on which two algorithms are proposed. The next two sections deal with these algorithms, their error estimates, and some numerical experiments. In Section~\ref{sec:comp}, we compare convergence speed of the two algorithms. In Section~\ref{sec:statpt}, the case when $g$ has a finite number of stationary points in $[a,b]$ is studied; we show how the modified FCC rules can be applied in this case. An error estimate is provided, and some numerical results are given. Finally, we bring a conclusion.
\paragraph{Some general remarks} Throughout the paper, $C$ and $C'$ stand for generic constants independent of $k$, and their values may differ from place to place. Their dependency on other parameters (if exist) is declared in each section. Note also that the space of $m$ times continuously differentiable functions is denoted by the common notation $C^m[a,b]$. Also, in all of the numerical experiments, the reference values of the sample integrals have been computed by Mathematica to at least 10 digits higher than the machine precision. All the numerical experiments are performed in \matlab\ R2015b by my personal laptop.\footnote{Intel Core i7-8550 CPU with a clock speed between 1.80 GHz and 1.99 GHz with 8GB of RAM.}
\section{The general idea}\label{sec:gen}
Consider the integral~\eqref{eq:intfg} with $g$ having no stationary points in $[a,b]$. Then, we can assume that $g'(x)>0$, $x\in[a,b]$, without loss of generality. By this assumption, the integral is changed into an integral with the linear oscillator. Indeed, by the change of variables $\tau=g(x)$,
\begin{align}
I^{[a,b]}_k(f,g)&:=\int_{g(a)}^{g(b)} F(\tau)\exp(\im k \tau)\,\df\tau \nonumber\\[1ex]
&=l\exp(\im k c)I_{\tilde{k}}(\widetilde{F}) \nonumber\\[1ex]
&=l\exp(\im k c)\int_{-1}^1 \widetilde{F}(\tau)\exp(\im\tilde{k}\tau)\,\df\tau,\label{eq:intftilde}
\end{align}
where
\begin{align}
F(\tau)&=\left((f/g')\circ g^{-1}\right)(\tau),\quad\tau\in[g(a),g(b)] \label{eq:F}\\[1ex]
c&=\frac{g(b)+g(a)}{2},\quad l=\frac{g(b)-g(a)}{2}, \\[1ex]
\tilde{k}&=lk,\quad \widetilde{F}(\tau)=F(c+l\tau).
\end{align}

The efficient and robust algorithm of~\cite{dom11} for constructing the FCC rules can now be applied to~\eqref{eq:intftilde}. However, evaluating $g^{-1}(x)$ for some $x$ is required in this process, and this is not of our favorite. For example, it may be impossible to express $g^{-1}(x)$ in a closed form. Then we should resort to Newton iterations, while their convergence is not guaranteed. Even if it converges rapidly, the idea imposes extra error and computational cost.

In the following, we develop a modification of the method in such a way that the need for computing $g^{-1}(x)$ is released. The idea can then be employed in the presence of stationary points by means of the composite rules on graded meshes, as developed in~\cite{dom13}. This idea is discussed in Section~\ref{sec:statpt}.
\subsection{The modified Filon-Clenshaw-Curtis (MFCC) rule}\label{sub:mfcc}
Employing the (${N+1}$)-point FCC rule~\eqref{eq:fcc} for computing $I_{\tilde{k}}(\widetilde{F})$ (that is essential  in~\eqref{eq:intftilde}) necessitates evaluation of $\widetilde{F}$ at the Clenshaw-Curtis points. We describe a method for accurate approximation of $\widetilde{F}$ at the Clenshaw-Curtis points, in which $g^{-1}$ is not needed to be evaluated at any point.

Let $N$ be a positive integer and consider $N+1$ Clenshaw-Curtis points $t_{0,N},\ldots,t_{N,N}$. Let $\ell_1:[-1,1]\to[a,b]$ and $\ell_2:[-1,1]\to[g(a),g(b)]$ be the onto affine-like mappings. By definition, $\widetilde{F}=F\circ\ell_2$. For a large integer $N'>1$, choose a set of arbitrary points ${-1=u_0<\cdots<u_{N'}=1}$ and define
\begin{equation}\label{eq:d}
d_j:=\ell^{-1}_2(g(\ell_1(u_j))),\quad j=0,\ldots,N'.
\end{equation}
Then $-1=d_0<\cdots<d_{N'}=1$. One can compute $\widetilde{F}(d_j)$ without needing to compute $d_j$ itself and to evaluate the inverse of $g$ at any point. Indeed,
\begin{equation}\label{eq:Fd}
\widetilde{F}(d_j)=F(\ell_2(d_j))=\dfrac{f(\ell_1(u_j))}{g'(\ell_1(u_j))}.
\end{equation}

Now, we can approximate $\widetilde{F}(t_{j,N})$ by interpolation of $\widetilde{F}$ at the nodes ${d_0,\ldots,d_{N'}}$. Denote by $\widetilde{Q}_{N'}$ the interpolating projection corresponding to some method of interpolation at the points ${-1=d_0<\cdots<d_{N'}=1}$, i.e.,
\begin{equation}
\widetilde{Q}_{N'}\widetilde{F}(d_j)=\frac{f(\ell_1(u_j))}{g'(\ell_1(u_j))},\quad j=0,\ldots,N'.
\end{equation}
Then,
\begin{equation}
\widetilde{Q}_{N'}\widetilde{F}(t_{j,N})\approx\widetilde{F}(t_{j,N}),\quad j=0,\ldots,N.
\end{equation}

If $\tilde{k}\ge1/2$, \ie $k\ge1/(g(b)-g(a))$, the (${N+1}$)-point MFCC rule is defined by
\begin{equation}\label{eq:mfcc}
I^{[a,b]}_{k,N}(f,g):=l\exp(\im kc)I_{\tilde{k},N}(\widetilde{Q}_{N'}\widetilde{F})=l\exp(\im kc)\sum_{n=0}^N{}''\alpha_{n,N}(\widetilde{Q}_{N'}\widetilde{F})\omega_n(\tilde{k}),
\end{equation}
where the coefficients are expressed by ~\eqref{eq:alpha}, i.e.,
\begin{equation}
\alpha_{n,N}(\widetilde{Q}_{N'}\widetilde{F})=\frac{2}{N}\sum_{j=0}^N{}''\cos\left(jn\pi/N\right)\widetilde{Q}_{N'}\widetilde{F}(t_{j,N}),\quad n=0,\ldots,N.
\end{equation}
Indeed, the vector ${(\alpha_{n,N}(\widetilde{Q}_{N'}\widetilde{F}))_{n=0}^N}$ is the discrete cosine transform (DCT) of type I of \sloppy ${(\widetilde{Q}_{N'}\widetilde{F}(t_{j,N}))_{j=0}^N}$, which can be computed by the FFT algorithm in $\mathcal{O}(N\ln N)$ flops. Also, the weights $\omega_n(\tilde{k})$ are stably and rapidly computed by the two-phase algorithm~\cite{dom11}.

In summary, the MFCC rule for computing~\eqref{eq:intfg} is as follows:
\begin{itemize}
\item
If $\tilde{k}\ge1/2$,
$$
I^{[a,b]}_k(f,g)=l\exp(\im k c)I_{\tilde{k}}(\widetilde{F})\approx l\exp(\im k c)I_{\tilde{k},N}(\widetilde{F})\approx l\exp(\im k c)I_{\tilde{k},N}(\widetilde{Q}_{N'}\widetilde{F}).
$$
\item
If $\tilde{k}<1/2$, the integral $I_{\tilde{k}}(\widetilde{F})$ and then $I^{[a,b]}_k(f,g)$ is no longer oscillatory, and can be computed by the standard Clenshaw-Curtis rule:
$$
I^{[a,b]}_{k,N}(f,g):=I_{0,N}(H_k),\quad H_k(x):=f(x)\exp(\im kg(x)).
$$
\end{itemize}
\subsection{Error analysis}\label{sub:generr}
Accuracy of an MFCC rule is actually affected by the error of the involved interpolation method. In this subsection, we study how the interpolation error affects the total error of an MFCC rule. The results are then employed to choose efficient interpolation methods, such that the total error of the MFCC rule decays rapidly.

For any integer $m\ge0$ and a function $\varphi$ defined on $[a,b]$, consider the weighted seminorm
\begin{equation}
\|\varphi\|_{H_w^{m}[a,b]}:=\left\{\int_{a}^b\dfrac{|\varphi^{(m)}(x)|^2}{\sqrt{(b-x)(x-a)}}\,\df x\right\}^{1/2},
\end{equation}
as introduced in~\cite{dom13}. Clearly, the  Hilbert space $H_w^{m}[a,b]$, induced by $\|.\|_{H_w^{m}[a,b]}$, contains $C^m[a,b]$.
\begin{lemma}\label{lem:cc}
Let $\mathsf{C}_N$ and $\mathsf{E}_N$ be $(N+1)\times(N+1)$ matrices with the entries
\begin{equation}\label{eq:c}
\mathsf{C}_N(n+1,j+1)=(2/N)\cos(jn\pi/N),\quad n,j=0,\ldots,N,
\end{equation}
and
\begin{equation}\label{eq:e}
\mathsf{E}_N(n+1,j+1)=\left\{
                   \begin{array}{ll}
                     1/2 & \hbox{ if $n=j\in\{0,N\}$,} \\[1ex]
                     1 & \hbox{ if $n=j\in\{1,\ldots,N-1\}$,} \\[1ex]
                     0 & \hbox{ otherwise.}
                   \end{array}
                 \right.
\end{equation}
\end{lemma}
Then, $\mathsf{A}_N:=\sqrt{N/2}\,\mathsf{E}^{1/2}_N\mathsf{C}_N\mathsf{E}^{1/2}_N$ is a symmetric orthogonal matrix.
\begin{proof}
Clearly, $\mathsf{A}_N$ is symmetric since $\mathsf{C}_N$ and $\mathsf{E}_N$ are so. On the other hand, $\mathsf{C}_N\mathsf{E}_N$ is nothing but the matrix of the DCT of type I. Thus, $(\mathsf{C}_N\mathsf{E}_N)^2=(2/N)I_N$, and then,
\begin{equation}
\mathsf{A}^{\top}_N\mathsf{A}_N=\frac{N}{2}\mathsf{E}^{1/2}_N\mathsf{C}_N\mathsf{E}_N\mathsf{C}_N\mathsf{E}_N\mathsf{E}^{-1/2}_N=I_N.
\end{equation}
\end{proof}

For all $r\in[0, 2]$, define
$$
\rho(r)=
\left\{
  \begin{array}{ll}
    r, & 0\le r\le1, \\[1ex]
    (5r-3)/2, & 1\le r\le2.
  \end{array}
\right.
$$
The following theorem provides an error estimate for the MFCC rules~\eqref{eq:mfcc}:
\begin{theorem}\label{thm:generr}
Assume that $f$ and $g$ are so smooth that ${F\in H_w^m[g(a),g(b)]}$ for some integer $m\ge1$. Then, for any $r\in[0,2]$ with ${m>\rho(r)}$, the error of the (${N+1}$)-point MFCC rule with $N\ge m-1$ is estimated as follows:
\begin{align}\label{eq:generr}
\left|I^{[a,b]}_k(f,g)-I^{[a,b]}_{k,N}(f,g))\right|&\le
C k^{-r}h^{m+1-r}N^{-m+\rho(r)}\nonumber\\
&+C'k^{-1}\sqrt{N}\sup_{\tau\in[-1,1]}\left|(\widetilde{F}-\widetilde{Q}_{N'}\widetilde{F})(\tau)\right|,
\end{align}
where $h:=b-a$ is the length of the integration interval, and $C, C'$ are constants independent of $N$ and $N'$.
\end{theorem}
\begin{proof}
In the worst case when $I^{[a,b]}_k(f,g)$ is oscillatory, \ie $\tilde{k}\ge1/2$, the total error is bounded by
\begin{align}
\left|I^{[a,b]}_k(f,g)-l\exp(\im k c)I_{\tilde{k},N}(\widetilde{Q}_{N'}\widetilde{F})\right|&\le\left|I^{[a,b]}_k(f,g)-I^{[g(a),g(b)]}_{k,N}(F)\right| \nonumber\\[1ex]
&+l\left|I_{\tilde{k},N}(\widetilde{F}-\widetilde{Q}_{N'}\widetilde{F})\right|.\label{eq:toterr}
\end{align}
(If $\tilde{k}<1/2$, the second term in the right-hand-side of~\eqref{eq:toterr} drops.)

By assumption, $F\in H_w^m[g(a),g(b)]$. According to Theorem~2.6 of~\cite{dom13},
\begin{equation}\label{eq:thm26}
\left|I^{[a,b]}_k(f,g)|-I^{[g(a),g(b)]}_{k,N}(F)\right|\le C\sigma_{m,N} k^{-r}l^{m+1-r}N^{-m+\rho(r)},
\end{equation}
where $C$ is a constant independent of $m$ and $N$, and
\begin{equation}
\sigma_{m,N}=\left\{
               \begin{array}{ll}
                 \prod_{j=1}^{m-1}\left(\frac{N+1}{N+1-j}\right), & m>1, \\[1ex]
                 1, & m=1.
               \end{array}
             \right.
\end{equation}
For a fixed $m$, one can see that ${\sigma_{m,N}\to1}$ as ${N\to\infty}$. Hence, $C\sigma_{m,N}$ is bounded by a constant independent of $N$, and then by the mean value theorem on $g$,
\begin{align}
\left|I^{[a,b]}_k(f,g)|-I^{[g(a),g(b)]}_{k,N}(F)\right|&\le C k^{-r}l^{m+1-r}N^{-m+\rho(r)}\nonumber\\[2ex]
&\le C k^{-r}h^{m+1-r}N^{-m+\rho(r)}.\label{eq:errcor27}
\end{align}
where $C$ is independent of $N$.

On the other hand,
\begin{equation}\label{eq:wcdelta}
\left|I_{\tilde{k},N}(\widetilde{F}-\widetilde{Q}_{N'}\widetilde{F})\right|=|\boldchr{\omega}^{\top}_N\mathsf{C}_N\boldchr{\delta}_N|,
\end{equation}
where $\mathsf{C}_N$ is defined by~\eqref{eq:c}, and $\boldchr{\omega}_N$, $\boldchr{\delta}_N$ are column vectors defined by
\begin{align*}
\boldchr{\omega}_N&=[\omega_0(\tilde{k})/2,\omega_1(\tilde{k}),\ldots,\omega_{N-1}(\tilde{k}),\omega_N(\tilde{k})/2]^{\top},\\[1ex]
\boldchr{\delta}_N&=\left[\frac{(\widetilde{F}-\widetilde{Q}_{N'}\widetilde{F})(t_{0,N})}{2},(\widetilde{F}-\widetilde{Q}_{N'}\widetilde{F})(t_{1,N}),\ldots,
(\widetilde{F}-\widetilde{Q}_{N'}\widetilde{F})(t_{N-1,N}),\frac{(\widetilde{F}-\widetilde{Q}_{N'}\widetilde{F})(t_{N,N})}{2}\right]^{\top}.
\end{align*}
(see Remark 2.1 of~\cite{dom11}). By Cauchy-Schwarz inequality, Eq.~\eqref{eq:wcdelta} yields
\begin{equation}\label{eq:csi}
\left|I_{\tilde{k},N}(\widetilde{F}-\widetilde{Q}_{N'}\widetilde{F})\right|\le\|\boldchr{\omega}_N\|_2\|\mathsf{C}_N\boldchr{\delta}_N\|_2.
\end{equation}
The well-known asymptotic expansion of highly oscillatory integrals in the absence of stationary points (see, \eg\cite{ise05}) yields ${|I_{\tilde{k}}(T_n)|=\mathcal{O}(\tilde{k}^{-1})}$, and then
\begin{equation}\label{eq:wn}
\|\boldchr{\omega}_N\|_2\le C'l^{-1}k^{-1}\sqrt{N}.
\end{equation}
Note that $\|T_n\|_{\infty}\le1$ for all $n$, so $C'$ in~\eqref{eq:wn} can be assumed to be independent of $N$ and $N'$. On the other hand, Lemma~\eqref{lem:cc} implies that
\begin{align}\label{eq:cnen}
\left\|\mathsf{C}_N\mathsf{E}_N\right\|_2\le\left\|\mathsf{E}^{-1/2}_N\right\|_2\left\|\mathsf{E}^{1/2}_N\mathsf{C}_N\mathsf{E}^{1/2}_N\right\|_2\left\|\mathsf{E}^{1/2}_N\right\|_2
=\frac{2}{\sqrt{N}}.
\end{align}
Now, the upper bound~\eqref{eq:csi} can be estimated by~\eqref{eq:wn} and~\eqref{eq:cnen}:
\begin{align}
\left|I_{\tilde{k},N}(\widetilde{F}-\widetilde{Q}_{N'}\widetilde{F})\right|
&\le\|\boldchr{\omega}_N\|_2\left\|\mathsf{C}_N\boldchr{\delta}_N\right\|_2
\le\|\boldchr{\omega}_N\|_2\left\|\mathsf{C}_N\mathsf{E}_N\right\|_2\left\|\boldchr{y}_N\right\|_2\nonumber\\
&\le\frac{2}{\sqrt{N}}\|\boldchr{\omega}_N\|_2\left\|\boldchr{y}_N\right\|_2
\le C'l^{-1}k^{-1}\sqrt{N}\sup_{\tau\in[-1,1]}\left|(\widetilde{F}-\widetilde{Q}_{N'}\widetilde{F})(\tau)\right|, \label{eq:csi2}
\end{align}
where $\boldchr{y}_N:=\mathsf{E}^{-1}_N\boldchr{\delta}_N$. Now, the result follows by~\eqref{eq:toterr},~\eqref{eq:errcor27}, and~\eqref{eq:csi2}.
\end{proof}

The following result follows immediately from Theorem~\ref{thm:generr}:
\begin{corollary}\label{cor:generr}
Assume that $f$ and $g$ are so smooth that ${F\in H^{N+1}_w[g(a),g(b)]}$ for some integer $N\ge1$. Then, for any ${r\in[0,2]}$, the error of the (${N+1}$)-point MFCC rule is estimated as follows:
\begin{align}\label{eq:generr2}
\left|I^{[a,b]}_k(f,g)-I^{[a,b]}_{k,N}(f,g))\right|&\le
C k^{-r}h^{N+2-r}\frac{N^{-1+\rho(r)}}{N!2^N}\|g'\|_{\infty}^N\nonumber\\[1ex]
&+C'k^{-1}\sqrt{N}\sup_{\tau\in[-1,1]}\left|(\widetilde{F}-\widetilde{Q}_{N'}\widetilde{F})(\tau)\right|,
\end{align}
where $h:=b-a$, $C, C'$ are constants independent of $N, N'$, and ${\|g'\|_{\infty}=\sup_{x\in[a,b]}\left|g'(x)\right|}$.
\end{corollary}
\begin{proof}
Note that
\begin{equation}
\sigma_{N+1,N}=\frac{N^N}{N!}\left(1+\frac{1}{N}\right)^N,
\end{equation}
and ${\left(1+1/N\right)^N}$ is bounded for all ${N\ge1}$. Thus, by the mean value theorem on $g$,~\eqref{eq:thm26} with ${m=N+1}$ is adjusted as
\begin{align*}\label{eq:thm26}
\left|I^{[a,b]}_k(f,g)|-I^{[g(a),g(b)]}_{k,N}(F)\right|&\le C k^{-r}\left(\frac{h\|g'\|_{\infty}}{2}\right)^{N+2-r}\frac{N^{-1+\rho(r)}}{N!}\\
&=C k^{-r}h^{N+2-r}\frac{N^{-1+\rho(r)}}{N!2^N}\|g'\|_{\infty}^N.
\end{align*}
\end{proof}

As it is seen by~\eqref{eq:generr} or~\eqref{eq:generr2}, the total error of the MFCC rule is bounded by the sum of two terms: The first term is nothing but the error of the FCC rule, studied in details in~\cite{dom11,dom13}; it does not depend on $N'$ and decays rapidly as $k$ or $N$ grows. The second term associates with the interpolation process, that may be quite large due to the Runge effect. In the sequel, we present two strategies in order to treat the latter problem and obtain error bounds which decay rapidly by increasing $N$ or $N'$.
\section{No stationary points}\label{sec:nonstat}
One popular idea for treating the Runge phenomenon lies in splines. In this section, we present a method of integration based on \emph{composite} MFCC rules on uniform meshes. Without loss of generality, we consider the integral~\eqref{eq:intfg} over the interval $[0,1]$. Recall from the previous section the assumption that ${g'(x)>0}$, ${x\in[0,1]}$.
\subsection{Algorithm~I}\label{sub:algI}
Take a large integer $M$ and divide the integration interval $[0,1]$ into $M$ subintervals of equal lengths $h$:
\begin{equation}\label{eq:ugrid}
x_n:=nh,\quad n=0,\ldots,M,\quad h:=1/M.
\end{equation}
Then,
$$
g(0)=g(x_0)<\cdots<g(x_M)=g(1).
$$
Assume that $f\in C^{N+1}[0,1]$ and $g\in C^{N+2}[0,1]$ for some integer $N\ge 1$. Then it is easy to see that $F$, as defined by~\eqref{eq:F}, lies in $C^{N+1}[g(0),g(1)]$. In each panel $[x_{n-1},x_n]$, we employ the (${N+1}$)-point MFCC rule~\eqref{eq:mfcc} with $N'=N$, $u_j=t_{N-j,N}$, and $\widetilde{Q}_N$ being the Lagrange interpolating projection at the points $-1=d_0<\cdots<d_N=1$. The accuracy of this kind of composite MFCC rules may be studied as follows.
\subsubsection{Error analysis}
Take an arbitrary subinterval $[x_{n-1},x_n]$, and for simplification of notations, set $[a,b]:=[x_{n-1},x_n]$. We divide our discussion into two parts: At first, we show that $d_0,\ldots,d_N$ are `good' interpolation points for small $[a,b]$ in the sense that the corresponding Lebesgue constant grows only logarithmically by $N$ provided that $M$ increases according to $N$. In the second part, the total error of the composite MFCC rule is estimated by Corollary~\ref{cor:generr} and an error bound for the interpolant $\widetilde{Q}_N\widetilde{F}$ associated with each panel.
\paragraph{Estimating the Lebesgue constant}
By the mean value theorem,~\eqref{eq:d} yields
\begin{equation}\label{eq:d-d}
d_i-d_j=\dfrac{b-a}{g(b)-g(a)}g'(\eta_{ij})(t_{N-i,N}-t_{N-j,N}),
\end{equation}
for any $i,j\in\{0,\ldots,N\}$ and some $\eta_{ij}\in(a,b)$. Since ${d_0=t_{N,N}=-1}$ and ${d_N=t_{0,N}=1}$, \eqref{eq:d-d} implies that the points $d_0,\ldots,d_N$ can be considered as `perturbed Clenshaw-Curtis points' when $M$ is sufficiently large. This is because $g'$ is positive and uniformly continuous on $[a,b]$, so
\begin{equation}\label{eq:mvt}
\dfrac{b-a}{g(b)-g(a)}g'(\eta_{ij})\simeq1,
\end{equation}
by the mean value theorem.

The Lebesgue constant for a set $\Pi_N$ of ${N+1}$ interpolation points in an interval $\mathcal{I}$ is defined by
\begin{equation}
\max_{x\in\mathcal{I}}\sum_{i=0}^N|L_i(x)|,
\end{equation}
where $L_i$ is the ($i+1$)-th Lagrange cardinal polynomial for the set $\Pi_N$. It is well-known that the Lebesgue constant determines how relatively close is the maximum error of an interpolation to that of the best approximation. Denote by $\Lambda_N$ and $\tilde{\Lambda}_N$ the Lebesgue constants associated with the Clenshaw-Curtis points ${t_{N,N},\ldots,t_{0,N}}$ and the perturbed Clenshaw-Curtis points ${d_0,\ldots,d_N}$, respectively. We show that $\tilde{\Lambda}_N\le\mathrm{e}\Lambda_N$ provided that $M$ increases according to $N$.

Considering the change of variables $\tau=\ell^{-1}_2(g(\ell_1(x)))$,~\eqref{eq:d-d} implies that the ($i+1$)-th Lagrange cardinal polynomial $\tilde{L}_i$ associated with the points $d_0,\ldots,d_N$ can be written as follows:
\begin{equation}\label{eq:lag}
\tilde{L}_i(\tau)=\prod_{\btop{j=0}{j\neq i}}^N\frac{\tau-d_j}{d_i-d_j}=\prod_{\btop{j=0}{j\neq i}}^N\frac{g'(\eta_j(\tau))}{g'(\eta_{ij})}\frac{x-t_{N-j,N}}{t_{N-i,N}-t_{N-j,N}},
\end{equation}
where $\eta_j(\tau)\in(a,b)$ for all $j=0,\ldots,N$ and $\tau\in[-1,1]$. Positivity and continuity of $g'$ on the compact interval $[0,1]$ imply that there exists $h_N>0$ such that for all subintervals $[a,b]$ with $b-a<h_N$,
\begin{equation}\label{eq:ggeps}
1-\frac{1}{N}<\frac{g'(\eta_j(\tau))}{g'(\eta_{ij})}<1+\frac{1}{N},\quad \tau\in[-1,1],\quad i,j=0,\ldots,N.
\end{equation}
Thus,~\eqref{eq:lag} with~\eqref{eq:ggeps} implies that,
\begin{equation}\label{eq:lagineq}
\left(1-1/N\right)^N\sum_{i=0}^N|L_i(x)|<\sum_{i=0}^N|\tilde{L}_i(\tau)|<\left(1+1/N\right)^N\sum_{i=0}^N|L_i(x)|,
\end{equation}
where $L_i$ is the ($i+1$)-th Lagrange cardinal polynomial for the Clenshaw-Curtis points. Since $\tau=\ell^{-1}_2(g(\ell_1(x)))$ is a bijective mapping on $[-1,1]$, taking maximum over $x\in[-1,1]$ (or over $\tau\in[-1,1]$),~\eqref{eq:lagineq} yields
\begin{equation}\label{eq:ineqleb}
\left(1-1/N\right)^N\Lambda_N<\tilde{\Lambda}_N<\left(1+1/N\right)^N\Lambda_N.
\end{equation}
Since ${(1+1/N)^N}$ is bounded,~\eqref{eq:ineqleb} implies that the Runge effect never occurs in the interpolation of $\widetilde{F}$ at the points ${d_0,\ldots,d_N}$, and the error is comparable to that of Chebyshev interpolation provided that $M\ge1/h_N$, i.e., $M$ is taken large enough according to $N$.
\paragraph{Total error bound}
Clearly, the interpolating error is bounded by
$$
\sup_{x\in[-1,1]}\left|(\widetilde{F}-\widetilde{Q}_N\widetilde{F})(x)\right|\le (2l)^{N+1}\max_{x\in[g(a),g(b)]}\left|F^{(N+1)}(x)\right|.
$$
This is because $F\in C^{N+1}[g(a),g(b)]$, and then $\widetilde{F}\in C^{N+1}[-1,1]$. Now, by the mean value theorem on $g$,
\begin{equation}
\sup_{x\in[-1,1]}\left|(\widetilde{F}-\widetilde{Q}_N\widetilde{F})(x)\right|\le C'h^{N+1},
\end{equation}
for $N$ large enough. Here, $C'$ is a generic constant independent of $M$, but dependent of $N$. Since $\widetilde{F}\in C^{N+1}[-1,1]\subseteq H^{N+1}_w[-1,1]$, Corollary~\ref{cor:generr} can now be employed to estimate the total error in the arbitrary panel $[a,b]$:
\begin{align}
\left|I^{[a,b]}_k(f,g)|-l\exp(\im k c)I_{\tilde{k},N}(\widetilde{Q}_N\widetilde{F})\right|&\le C k^{-r}h^{N+2-r}\frac{N^{-1+\rho(r)}}{N!2^N}\|g'\|_{\infty}^N\nonumber\\
&{}+C'k^{-1}h^{N+1},\label{eq:err1raw}
\end{align}
where $C$ is independent of $M$ and $N$. If one takes sum over all the subintervals ${[x_{n-1},x_n]}$ and take into account \sloppy that ${F\in C^{N+1}[g(0),g(1)]}$, an error bound for Algorithm~I is obtained by~\eqref{eq:err1raw}:
\begin{equation}\label{eq:err1}
C k^{-r}h^{N+1-r}\frac{N^{-1+\rho(r)}}{N!2^N}\|g'\|_{\infty}^N+C'k^{-1}h^N.
\end{equation}
\begin{remark}
Note that $C'$ in the error bound~\eqref{eq:err1} may grows by $N$. Thus, the recommended strategy in Algorithm~I is to keep $N$ fixed at a moderate value and increase $M$ until the required accuracy is achieved. This can in addition suppress the Runge effect as discussed above.
\end{remark}

A few words about the complexity of the algorithm is also necessary. The (${N+1}$)-point FCC rule can be stably implemented with $\mathcal{O}(N\ln N)$ flops~\cite{dom11}. The Lagrange interpolation can also be performed stably by an improved formula, the so-called the ``first form of the barycentric interpolation formula'' (see~\cite{rut90,ber04,hig04}); evaluation of the formula at any point costs at $\mathcal{O}(N^2)$, so evaluation of all the values ${\widetilde{Q}\widetilde{F}(t_{j,N})}$, ${j=0,\ldots,N}$, requires $\mathcal{O}(N^3)$ flops. Since this cost should be paid for each panel, and there exist $M$ panels indeed, Algorithm~I requires ${\mathcal{O}\left(MN(N^2+\ln N)\right)}$ flops.
\subsubsection{Numerical experiments}\label{sub:numericI}
Throughout this subsection, we consider the model integral $I^{[0,1]}_k(f,g)$ with
\begin{equation}\label{eq:sampfg1}
f(x)=\dfrac{x^{4.5}}{1+x^2},\qquad g(x)=\sqrt{x^2+3x+4},
\end{equation}
with some $k>0$. It can be seen that $g'(x)>0$, $x\in[0,1]$, $f\in C^{4}[0,1]$ and $g\in C^{\infty}[0,1]$. We employ Algorithm~I and illustrate the error estimate~\eqref{eq:err1}.

\begin{table}
\renewcommand{\arraystretch}{1.4}
\caption{Application of Algorithm~I to the model integral ${I^{[0,1]}_{100}(f,g)}$ with $f$ and $g$ defined by~\eqref{eq:sampfg1}: Absolute error with its decaying rate.\label{tbl:one}}
\begin{center} \footnotesize
\begin{tabular}{c*{6}{c}}
\cline{2-7}
& \multicolumn{2}{c}{$N=1$} & \multicolumn{2}{c}{$N=2$} & \multicolumn{2}{c}{$N=3$} \\
\cline{1-7}
$M$& error & rate & error & rate & error & rate \\
\cline{1-7}
2  & $1.87\times10^{-4}$ &    --- & $1.27\times10^{-5}$ &    --- & $2.22\times10^{-6}$ &    ---  \\
4  & $2.72\times10^{-5}$ &    2.8 & $3.87\times10^{-6}$ &    1.7 & $4.43\times10^{-7}$ &    2.3  \\
8  & $3.42\times10^{-5}$ &   -0.3 & $8.11\times10^{-8}$ &    5.6 & $3.50\times10^{-8}$ &    3.7  \\
16 & $3.98\times10^{-5}$ &   -0.2 & $1.40\times10^{-7}$ &   -0.8 & $1.41\times10^{-9}$ &    4.6  \\
32 & $3.69\times10^{-6}$ &    3.4 & $2.37\times10^{-9}$ &    5.9 & $1.66\times10^{-11}$ &    6.4  \\
64 & $8.25\times10^{-7}$ &    2.2 & $1.25\times10^{-10}$ &   4.2 & $7.41\times10^{-13}$ &    4.5  \\
\cline{1-7}
\end{tabular}
\end{center}
\end{table}
\paragraph{Experiment~1}
Here we study the case when $k$ and $N$ are kept fixed and $h\to0$. According to the error bound~\eqref{eq:err1}, the convergence of the method is of order  $\mathcal{O}(h^N)$, which can be illustrated by the following example. Let $k=100$. Since $f\in C^{4}[0,1]$ and $g\in C^{\infty}[0,1]$, $F\in C^{4}[g(0),g(1)]$.  Thus, the error bound~\eqref{eq:err1} is valid if the parameter $N$ does not exceed 3. The absolute error with its decaying rate, as $h\to0$, is given in Table~\ref{tbl:one} for $N=1,2,3$. As mentioned in~\S\ref{sub:algI}, the distribution of the interpolation nodes in each panel is so that the corresponding Lebesgue constant is rather small. Thus, the convergence rate will be higher than what we expect. For coarser grids (corresponding to larger $h$), instead, the convergence rate of ${\mathcal{O}(h^N)}$ is not expected. Considering all these facts together, the numerical results of Table~\ref{tbl:one} are in agreement with the theory.

\begin{figure}
\includegraphics[width=\textwidth]{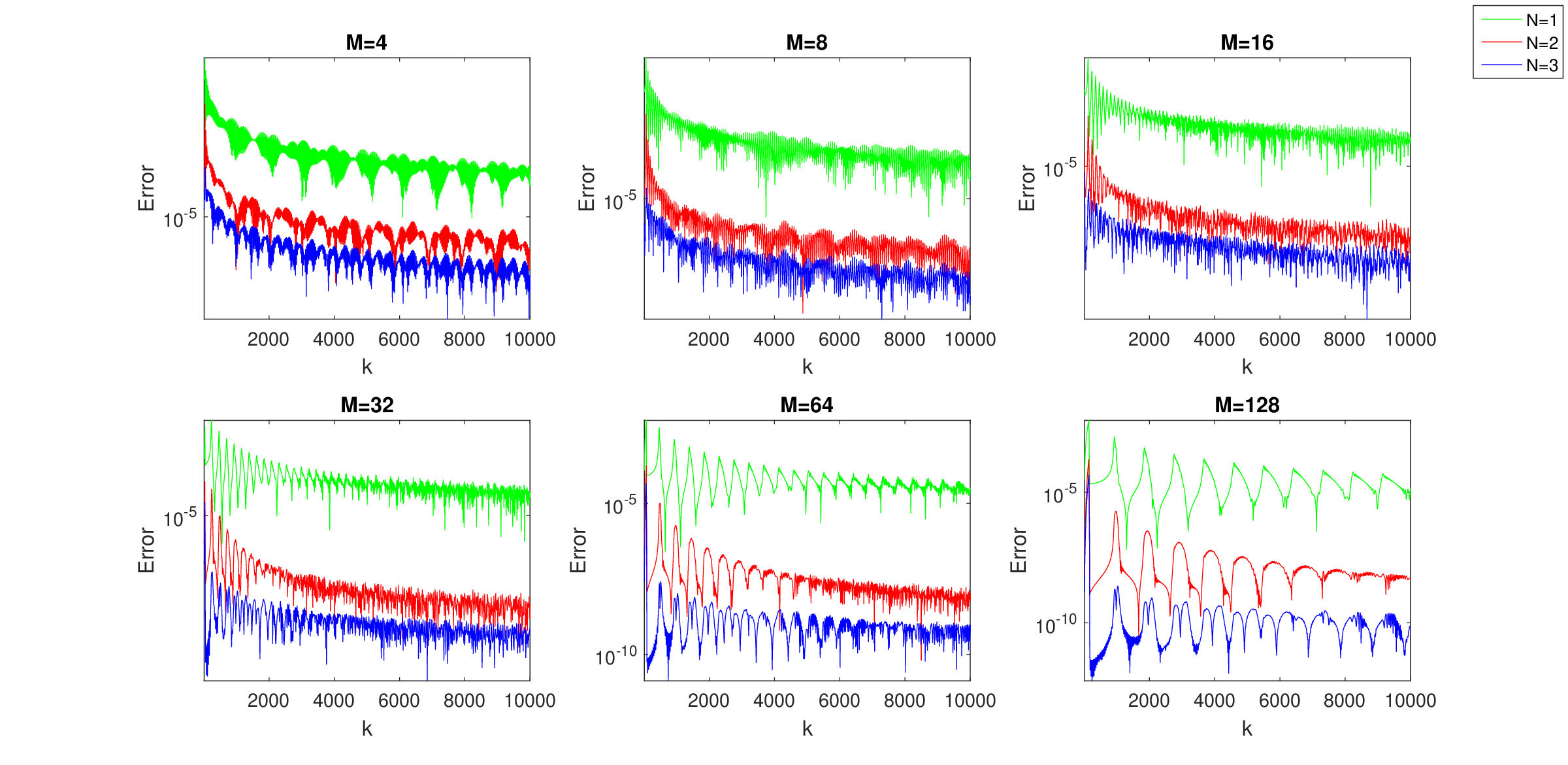}
\caption{\small Absolute error scaled by $k$ of Algorithm~I with some $M$ and $N$, when applied to the model integral $I^{[0,1]}_k(f,g)$ with~\eqref{eq:sampfg1}. \label{fig:meth1}}
\end{figure}
\paragraph{Experiment~2}
The error bound~\eqref{eq:err1} suggests that for a fixed parameters $N$ and $M$, the maximum rate of convergence is ${\mathcal{O}(k^{-1})}$ as ${k\to\infty}$. In this experiment, we illustrate this rate by a numerical example. For each $M\in\{4,8,16,32,64,128\}$ and $N\in\{1,2,3\}$, we apply Algorithm~I to the model integral ${I^{[0,1]}_k(f,g)}$ with~\eqref{eq:sampfg1} when $k$ varies in a wide band from 10 to $10^4$. The absolute error is scaled by $k$, and the scaled error for each $N$ and $M$ has been plotted as a function of $k$ (Figure~\ref{fig:meth1}). As it is seen the error for each $M$ and $N$ does not deteriorate as $k$ increases, and this observation is in agreement with the maximum convergence order ${\mathcal{O}(k^{-1})}$ suggested by the theory.

\begin{figure}
\includegraphics[width=\textwidth]{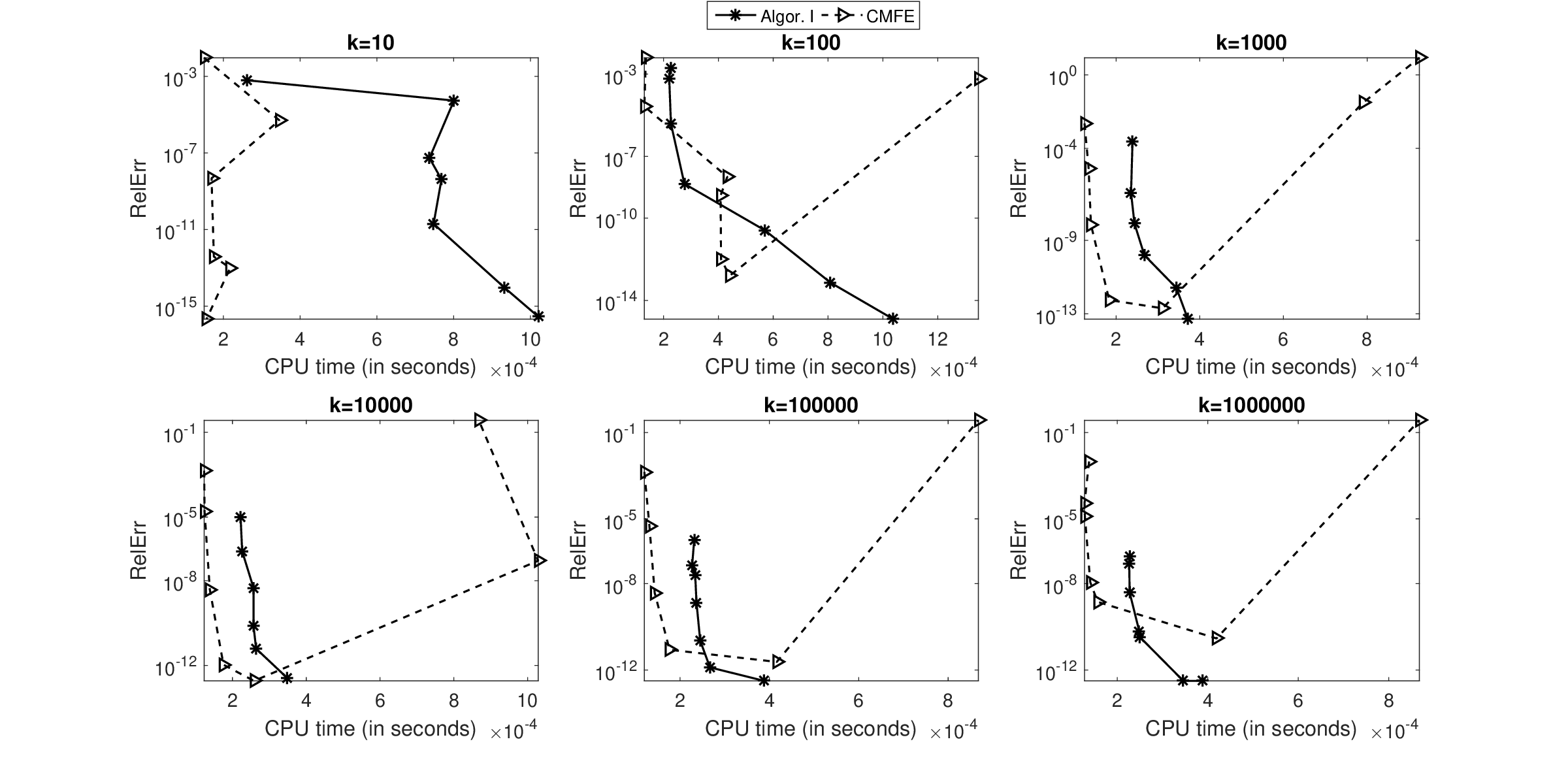}
\caption{\small Algorithm~I (solid line) and the CMFE rule of~\cite{ma18} with exponential order of convergence (dashed line) when applied to the integral~\eqref{eq:sampleMaXu}: Relative error vs execution time (in seconds).\label{fig:maxu}}
\end{figure}
\paragraph{Experiment~3}
Here, we compare convergence speed of Algorithm~I with the composite Filon-type rules~\cite{ma18} for the integrals ${I^{[0,1]}_{k}(f,g)}$ with smooth $f$ and $g$. For a given integer ${n\ge1}$, consider the graded mesh ${x_0=0}$, ${x_j=k^{(j-1)/(n-1)-1}}$, ${j=1,\ldots,n}$. For the $j$-th panel ${[x_{j-1},x_j]}$, define the parameters ${M_j:=\left\lceil\max\{|g'(x_{j-1})|,|g'(x_j)|\}\right\rceil}$ and ${N_j:=n(n-1)/(n+1-j)}$, where $\lceil x\rceil$ denotes the smallest integer not less than $x$. Then, apply Algorithm~I to the integral over ${[x_{j-1},x_j]}$ with ${N_j+1}$ Clenshaw-Curtis points and a uniform mesh with $M_j$ panels. This is a special kind (with Clenshaw-Curtis points) of the rule CMFE proposed in~\cite{ma18} for ${I^{[0,1]}_{k}(f,g)}$ with smooth $f$. In the absence of harmful rounding, the rule converges exponentially. More precisely, when ${n\to\infty}$, the error of the rule scales as ${\mathcal{O}\left((n-1)^{-1/2}k^{-n-1}\right)}$ provided that
\begin{equation}
(n-1)\left(\ln(n-2+\mathrm{e})-1\right)\ge\ln k.
\end{equation}

We apply Algorithm~I and the rule CMFE, as described above, to the following sample integral for $k=10^d$, $d=1,\ldots,6$:
\begin{equation}\label{eq:sampleMaXu}
\int_0^1\exp\left(\im k(\sin(\pi x/2)+2x)/3\right)\,\df x.
\end{equation}
Figure~\ref{fig:maxu} shows the relative error vs the execution time (in seconds) for each case. In this example, the CMFE rule is in general faster than Algorithm~I because it requires smaller number of subintervals to reach a given accuracy. This is due to the special graded meshes which result in the exponential order of convergence for the CMFE rule. When higher accuracy demanded, however, it may exhibit instable behavior due to rounding error in the first few panels. Indeed, the panels in a narrow neighborhood of 0 become too small for larger $k$. For example, one can see that the first panel is ${[0,1/k]}$, which in turn partitioned by a uniform mesh of size $M_1$ and in each subpanel, ${N_1:=n-1}$ interpolation points are involved. Thus, one should evaluate functions at a bunch of very closed points, and this may deteriorate the accuracy due to rounding. In the sample integral~\eqref{eq:sampleMaXu}, $f$ is constant; if it was not so, this effect might occur even more apparently.
\subsection{Algorithm~II}\label{sub:algII}
In this section, we follow~\cite{bru12} and employ an efficient interpolation method which does not allow the Runge effect. In addition, one can always increase its accuracy to as high as desired without increasing the computational cost.

Consider the integral~\eqref{eq:intfg} on an arbitrary interval $[a,b]$ (not necessarily of small length). Here, we do not divide the integration interval $[a,b]$ into smaller subintervals as in Algorithm~I. Instead, we consider the (${N+1}$)-point MFCC rule on the whole interval and approximate the values ${\widetilde{F}(t_{j,N})}$ by interpolating $\widetilde{F}$ at a limited number of points, selected from $\{d_0,\ldots,d_{N'}\}$, in such a way that they surround $t_{j,N}$.

For a large integer $N'$, choose a set of arbitrary points ${-1=u_0<\cdots<u_{N'}=1}$ and consider the points ${-1=d_0<\ldots<d_{N'}=1}$ as defined by~\eqref{eq:d}. Choose a positive integer ${s<N'}$ and fix it throughout the process. For each ${j=0,\ldots,N}$, select an $s$-tube ${\mathcal{N}_{n,s}=(d_n,\ldots,d_{n+s-1})}$ such that ${d_n\le t_{j,N}\le d_{n+s-1}}$. Note that $n$ depends (not uniquely) on $j$. Thus, such a selection is not necessarily unique, while it is always possible since ${s<N'}$. In~\cite{bru12}, some selecting strategies are introduced, and in~\cite{maj15} a \matlab\ code of a certain selecting strategy has been provided. Now, consider the approximation
\begin{equation}
\widetilde{F}(t_{j,N})\approx p_{n,s}(t_{j,N}),\quad j=0,\ldots,N,
\end{equation}
where $p_{n,s}$ is the Lagrange interpolation polynomial of $\widetilde{F}$ at the points of $\mathcal{N}_{n,s}$. This means that $p_{n,s}$ is of the degree ${s-1}$, and it may change by $j$.

For computing $p_{n,s}(t_{j,N})$, we recommend the so-called ``first form of the barycentric interpolation formula''~\cite{rut90} that is a reformulation of the Lagrange interpolation polynomials. In~\cite{hig04} it has been proved that the formula is backward stable for any set of interpolation points.

Based on~\eqref{eq:mfcc}, one can introduce the rule
\begin{equation}\label{eq:rule2}
I^{[a,b]}_k(f,g)\approx l\exp(\im kc)\sum_{n=0}^N{}''\alpha_{n,N}(p_{n,s})\omega_n(\tilde{k}),
\end{equation}
where ${(\alpha_{n,N}(p_{n,s}))_{n=0}^N}$ is the DCT of type I of \sloppy ${(p_{n,s}(t_{j,N}))_{j=0}^N}$.
\subsubsection{Error analysis}
If $f\in C^s[a,b]$ and $g\in C^{s+1}[a,b]$, $\widetilde{F}\in C^s[-1,1]$, and then
\begin{equation}\label{eq:errns}
\max_{0\le n\le N}|(\widetilde{F}-p_{n,s})(t_{j,N})|\le C'\lambda^s,
\end{equation}
where
\begin{equation}
\lambda=\max_{1\le j\le N'}|d_j-d_{j-1}|,
\end{equation}
and $C'$ is independent of $N$ and $N'$, but dependent of $s$ (see, e.g.,~\cite{bru12}).

On the other hand, ${F\in C^s[g(a),g(b)]\subseteq H_w^s[g(a),g(b)]}$. Thus, for any ${r\in[0,2]}$, ${s>\rho(r)}$, and $N\ge s-1$, one can obtain the total error of the rule~\eqref{eq:rule2} by Theorem~\ref{thm:generr} and~\eqref{eq:errns}:
\begin{equation}\label{eq:err2}
C k^{-r}(b-a)^{s+1-r}N^{-s+\rho(r)}+C'k^{-1}\lambda^s\sqrt{N},
\end{equation}
where $C$ is independent of $N$, $N'$.

One can easily see from~\eqref{eq:d-d} that $\lambda$ decreases with the rate of ${\mathcal{O}(1/N')}$ as ${N'\to\infty}$. Therefore, if we take ${N'\ge kN}$, the error bound~\eqref{eq:err2} is reduced to
\begin{equation}\label{eq:err2p}
C k^{-r}(b-a)^{s+1-r}N^{-s+\rho(r)}+C'k^{-s-1}N^{-s+1/2}.
\end{equation}

The discussion about the complexity of the method is the same as that of Algorithm~I. Since the total interpolation process requites ${\mathcal{O}(Ns^2)}$ flops, the whole algorithm is performed at the cost of ${\mathcal{O}\left(N(\ln N+s^2)\right)}$. In order to increase the accuracy of the interpolation, one needs to increase $N'$ only, while the parameter $s$ is usually kept fixed at a moderate value. In practice, one should compute only $Ns$ members of $\{d_0,\ldots,d_{N'}\}$, not all of them. Thus, the cost of computation does not grow by $N'$. Also, the (${N+1}$)-point FCC rule converges rapidly as $N$ increases, so by a moderate value of $N$, one can reach a rather high accuracy. Therefore, the integral~\eqref{eq:intfg} can be accurately approximated by Algorithm~II at a rather low cost. We carry out a set of numerical experiments in Section~\ref{sec:comp} to illustrate our claim here.
\subsubsection{Numerical experiments}\label{sub:numericII}
Throughout this subsection, we consider the model integral $I^{[-1,1]}_k(f,g)$ with
\begin{equation}\label{eq:sampfg2}
f(x)=\frac{x-1}{1+x^2},\qquad g(x)=\sqrt{x^2+3x+4},
\end{equation}
and some $k$. It is easy to see that $g'(x)>0$, $x\in[-1,1]$ and $f,g\in C^s[-1,1]$ for all $s\ge1$. Thus, Algorithm~II can be applied. The aim of this subsection is to illustrate the convergence estimate~\eqref{eq:err2p}.

\begin{table}
\renewcommand{\arraystretch}{1.4}
\caption{Application of Algorithm~II with ${N'=100N}$ to the model integral ${I^{[-1,1]}_{100}(f,g)}$ with $f$ and $g$ defined by~\eqref{eq:sampfg2}: Absolute error with its decaying rate.\label{tbl:two}}
\begin{center} \footnotesize
\begin{tabular}{c*{8}{c}}
\cline{2-9}
& \multicolumn{2}{c}{$s=2$} & \multicolumn{2}{c}{$s=3$} & \multicolumn{2}{c}{$s=4$} & \multicolumn{2}{c}{$s=5$} \\
\cline{1-9}
$N$ & error & rate & error & rate & error & rate & error & rate \\
\cline{1-9}
2  & $6.42\times10^{-4}$ &    ---  & $6.42\times10^{-4}$ &    --- & $6.42\times10^{-4}$ &    --- & $6.42\times10^{-4}$ &    --- \\
4  & $5.42\times10^{-4}$ &    0.2  & $5.42\times10^{-4}$ &    0.2 & $5.42\times10^{-4}$ &    0.2 & $5.42\times10^{-4}$ &    0.2 \\
8  & $9.91\times10^{-5}$ &    2.5  & $9.91\times10^{-5}$ &    2.5 & $9.91\times10^{-5}$ &    2.5 & $9.91\times10^{-5}$ &    2.5 \\
16 & $2.93\times10^{-6}$ &    5.1  & $2.93\times10^{-6}$ &    5.1 & $2.93\times10^{-6}$ &    5.1 & $2.93\times10^{-6}$ &    5.1 \\
32 & $1.48\times10^{-8}$ &    7.6  & $1.77\times10^{-9}$ &   10.7 & $1.73\times10^{-9}$ &   10.7 & $1.73\times10^{-9}$ &   10.7 \\
64 & $1.21\times10^{-9}$ &    3.6  & $1.71\times10^{-12}$ &  10.0 & $2.22\times10^{-15}$ &  19.6 & $1.17\times10^{-15}$ &   20.5 \\
\cline{1-9}
\end{tabular}
\end{center}
\end{table}
\paragraph{Experiment~1}
If $k$ is kept fixed, the error bound~\eqref{eq:err2p} decays with the maximum rate of ${\mathcal{O}(N^{-s+1/2})}$ as $N$ increases. We consider the assumed model integral with ${k=100}$, and employ the algorithm with ${s=2,3,4,5}$, some increasing $N$, and ${N'=kN}$. The absolute error with its decaying rate is given in Table~\eqref{tbl:two}. As it is seen, the experimental convergence rate for each $s$ is in agreement with the theoretical one.

\begin{figure}
\includegraphics[width=\textwidth]{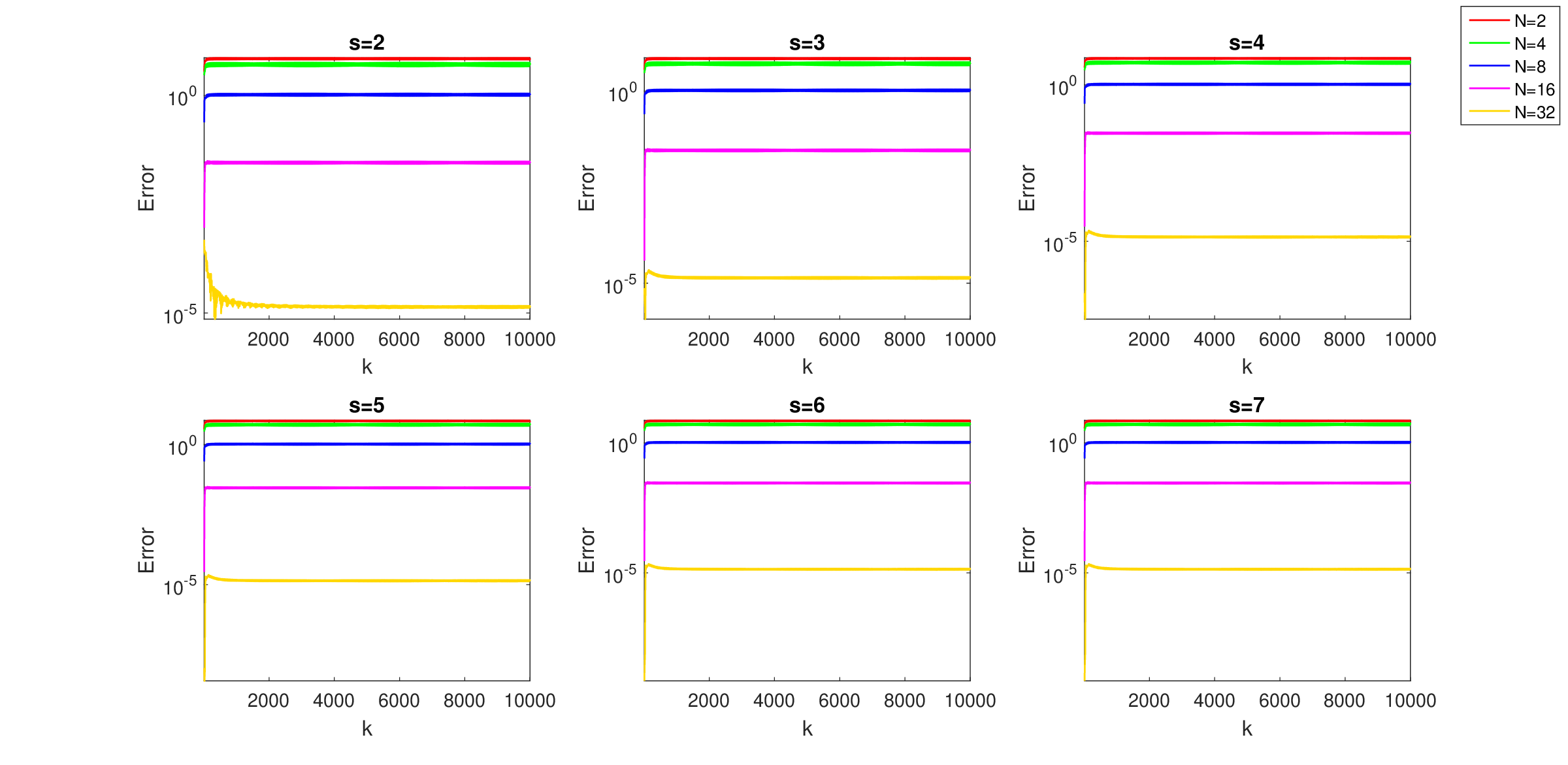}
\caption{\small Absolute error scaled by $k^2$ of Algorithm~II with some $s$ and $N$, when applied to the model integral ${I^{[-1,1]}_k(f,g)}$ with~\eqref{eq:sampfg2}. \label{fig:meth2}}
\end{figure}
\paragraph{Experiment~2}
If $N$, $N'$, and $s$ are kept fixed, the error estimate~\eqref{eq:err2p} suggests that the rate of convergence as $k\to\infty$ is not higher than ${\mathcal{O}(k^{-2})}$ provided that ${N'\ge kN}$. Here, we consider the model integral ${I^{[-1,1]}_k(f,g)}$ with~\eqref{eq:sampfg2} when $k$ varies in a wide band from 10 to $10^4$. Algorithm~II with ${s=2,\ldots,7}$ and ${N=2,4,8,16,32}$ is applied while ${N'=kN}$. The absolute error is scaled by $k^2$ and the scaled error for each $s$ and $N$ is plotted as a function of $k$ (see Figure~\ref{fig:meth2}). The horizontal trend of the scaled error, corresponding to each pair ${(s,N)}$, illustrates the maximum convergence rate of ${\mathcal{O}(k^{-2})}$.
\section{Comparisons}\label{sec:comp}
In this section, we compare Algorithms~I and~II to see which one in practice achieves a given accuracy faster. Our answer to this question, is based on some numerical results accompanied by a rough theoretical discussion.

Recall from Section~\ref{sub:algI} that Algorithm~I requires ${\mathcal{O}\left(MN(N^2+\ln N)\right)}$ flops. From the error bound~\eqref{eq:err1}, one can see that a rather large $N$ may reduce the error effectively even if ${h<1}$ is not too small. This is because $N$ is appearing in the exponents. Also, it is not recommended to use very large $N$ since the complexity of the algorithm increases rapidly. Thus, $N$ is better to take moderate values while $M$ should be rather small.

Similarly, we recall from Section~\ref{sub:algII} that the number of required flops for Algorithm~II is ${\mathcal{O}\left(N(\ln N+s^2)\right)}$. Note that the exponents in the error bound~\eqref{eq:err2p} may not be as large as those in the error bound~\eqref{eq:err1} because in practice, $s$ usually takes rather small values; larger values increases the complexity of the algorithm rapidly.

Based on the discussion above, our guess is that Algorithm~I may be faster than Algorithm~II provided that $M$ is rather small. In the following, we carry out a set of numerical experiments to support this guess.

\begin{figure}
\includegraphics[width=\textwidth]{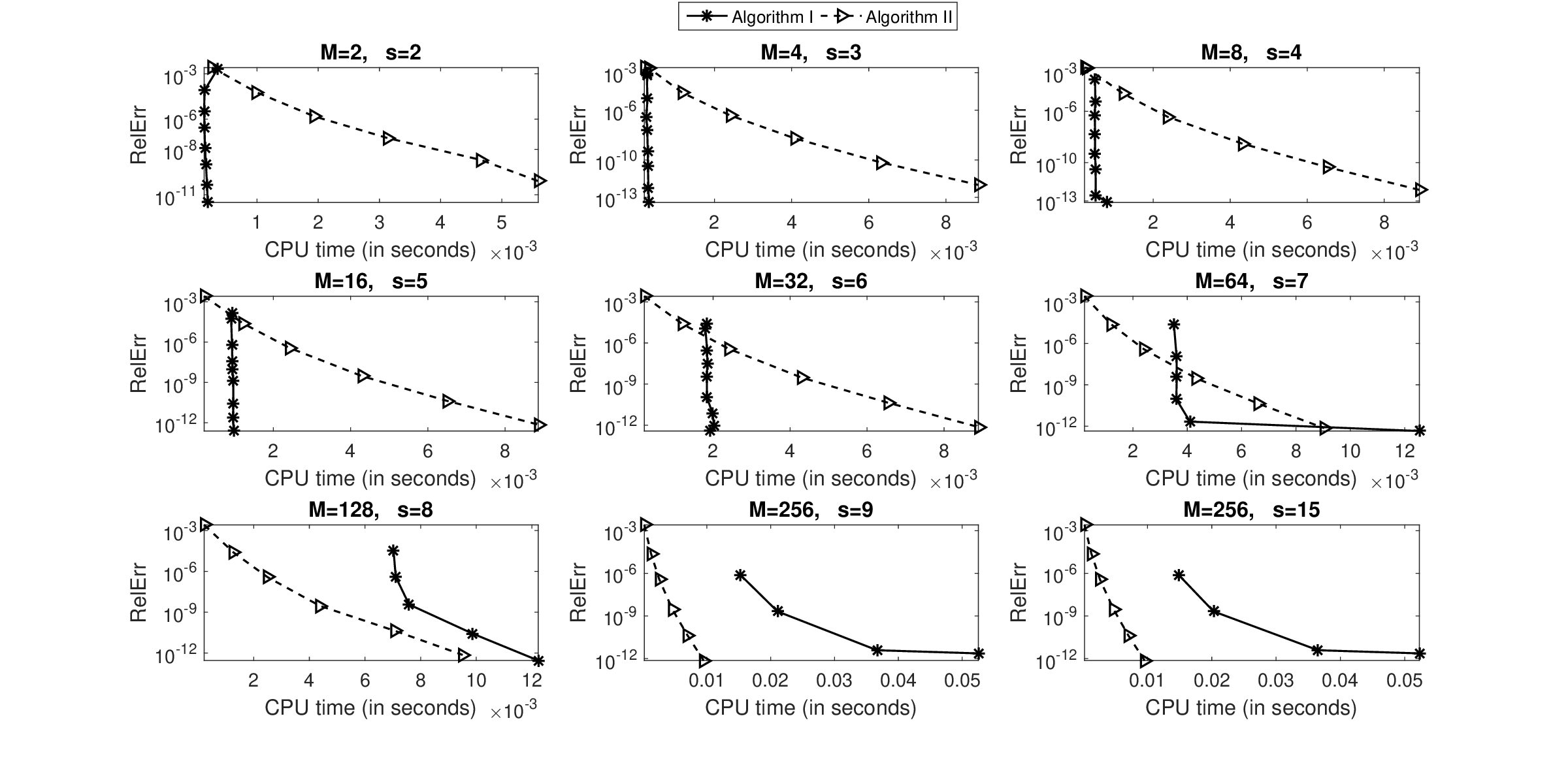}
\caption{\small Average CPU time (in seconds) vs relative error of Algorithms~I and~II when applied to the model integral~\eqref{eq:exmplcomp1}. The number $M$ of subintervals in Algorithm~I and the interpolation parameter $s$ in Algorithm~II are kept fixed in each plot while the accuracy increases by growing $N$. In Algorithm~II, ${N'=kN}$.\label{fig:comp1}}
\end{figure}
\paragraph{Experiment~1}
Recalling the model integral in~\S\ref{sub:numericII} with $k=1000$, consider the integral
\begin{equation}\label{eq:exmplcomp1}
\int_{-1}^1\frac{x-1}{1+x^2}\exp\left(1000\,\im\sqrt{x^2+3x+4}\right)\,\df x.
\end{equation}
In Algorithm~I, $M$ is kept fixed, and the accuracy increases by letting ${N\to\infty}$ (Figure~\ref{fig:comp1}). The algorithm for each set of the parameters ${\{M,N\}}$ is executed 10 times and the average of CPU times (in seconds) vs the relative error is plotted by a solid line. In Algorithm~II, $s$ is kept fixed and the accuracy increases by letting ${N\to\infty}$ while ${N'=kN}$. The algorithm for each set of the parameters ${\{s,N\}}$ is executed 10 times and the average of CPU times (in seconds) vs the relative error is plotted by a dashed line. Figure~\ref{fig:comp1} consists of nine subplots for some increasing values of $M$ (for Algorithm~I) and $s$ (for Algorithm~II). As it is seen, Algorithm~I is more accurate only when $M$ is not too large.

\begin{figure}
\includegraphics[width=\textwidth]{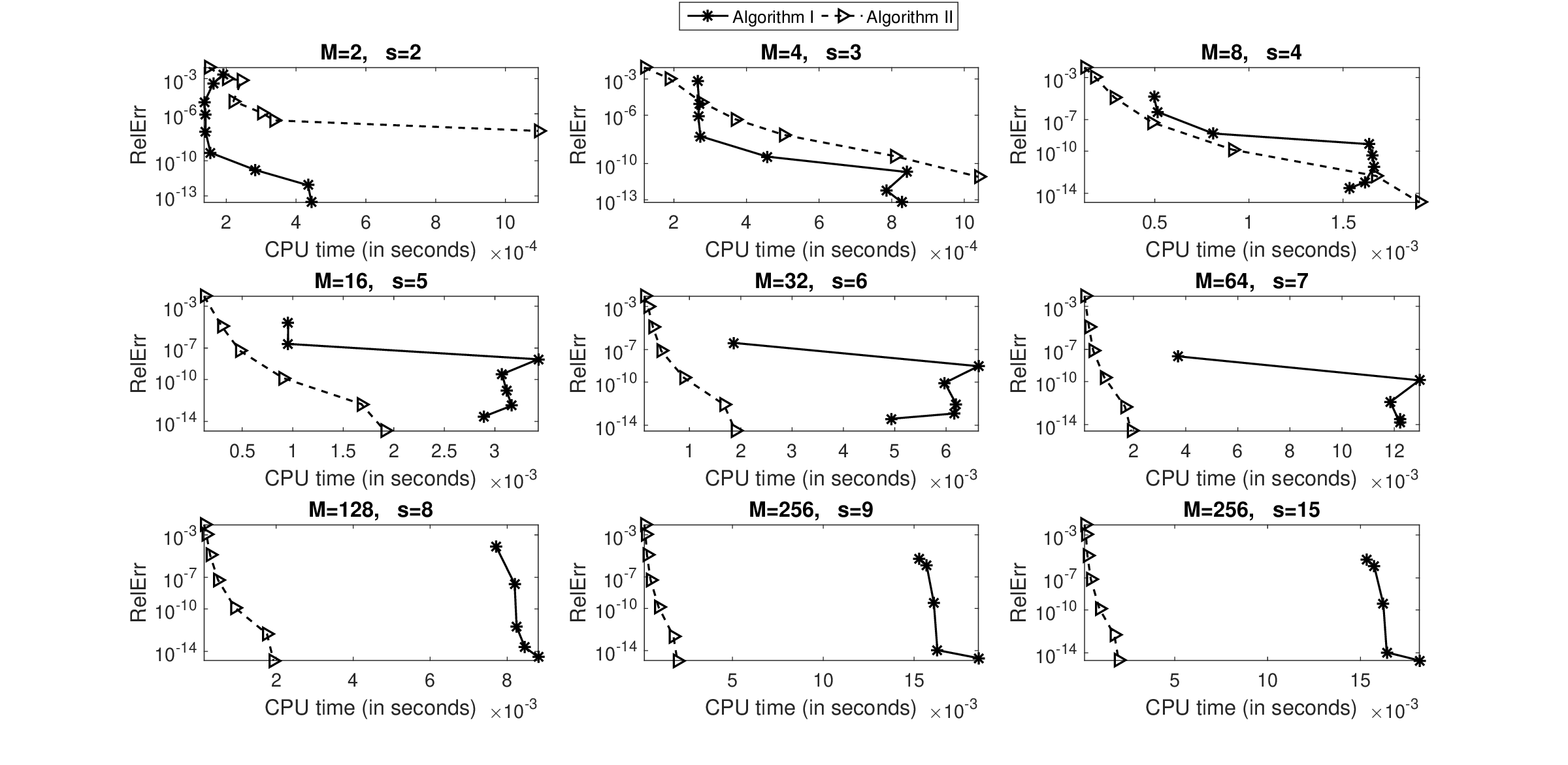}
\caption{\small Average CPU time (in seconds) vs relative error of Algorithms~I and~II when applied to the model integral~\eqref{eq:exmplcomp2}. The number $M$ of subintervals in Algorithm~I and the interpolation parameter $s$ in Algorithm~II are kept fixed in each plot while the accuracy increases by growing $N$. In Algorithm~II, ${N'=kN}$.\label{fig:comp2}}
\end{figure}
\paragraph{Experiment~2}
Here, we repeat Experiment~1 now for the model integral of~\S\ref{sub:numericI} with ${k=100}$, \ie
\begin{equation}\label{eq:exmplcomp2}
\int_{0}^1\frac{x^{4.5}}{1+x^2}\exp\left(100\,\im\sqrt{x^2+3x+4}\right)\,\df x.
\end{equation}
Note that the amplitude function is of the class $C^4[0,1]$. Thus, the error bound~\eqref{eq:err1} of Algorithm~I is valid when $N\le3$ and the error bound~\eqref{eq:err2} of Algorithm~II is valid when $s\le4$. Regardless to the error bounds, we employ both of the algorithms with larger $N$ and $s$ (Figure~\ref{fig:comp2}). Here again, as in Experiment~1, it is seen that Algorithm~I is more accurate only when $M$ is not too large.
\section{In the presence of stationary points}\label{sec:statpt}
Consider the integral~\eqref{eq:intfg}, and assume that $g'(x)$ vanishes at a finite number of points in $[a,b]$. It is said that a function $\varphi$ has a stationary point of order $n\ge1$ at $\xi\in[a,b]$ if
$$
\varphi'(\xi)=\cdots=\varphi^{(n)}(\xi)=0,\quad \varphi^{(n+1)}(\xi)\ne0.
$$

Assume that $g$ has a finite number of stationary points in $[a,b]$.  Then, we can always divide $[a,b]$ into subintervals such that $g$ has a single stationary point in each subinterval located at one of its endpoints. Thus, without loss of generality, we consider the integral ${I^{[0,1]}_k(f,g)}$, where $g$ has a single stationary point of order $n$ at $0$, and ${g'(x)>0}$ for any ${x\in(0,1]}$.

Under the change of variables $\tau=g(x)$, the integral is reduced to ${I^{[g(0),g(1)]}_{k}(F)}$, with $F$ defined by~\eqref{eq:F}. Again by the change of variables ${\tau=g(0)+2l\widehat{x}}$ with ${l=(g(1)-g(0))/2}$, the integral is transformed into one over the standard interval $[0,1]$:
\begin{align}
I^{[g(0),g(1)]}_{k}(F)=2l\exp(\im kg(0))I^{[0,1]}_{2lk}(\widehat{F}),
\end{align}
where
\begin{equation}
\widehat{F}(\widehat{x})=F(g(0)+2l\widehat{x}).
\end{equation}

By Theorem~4.1 of~\cite{dom13}, if $f$ and $g$ are smooth enough, ${\widehat{F}\in C_{\beta}^m[0,1]}$ for ${\beta=-n/(n+1)}$ and some $m$, depending on the degrees of smoothness of $f$ and $g$. Recall from~\cite{dom13} that for any ${\beta<0}$, ${C_{\beta}^m[0,1]}$ is defined as the space of all ${\varphi\in C(0,1]}$ such that
\begin{equation}
\|\varphi\|_{m,\beta}:=\max\left\{\sup_{x\in(0,1]}\left|x^{j-\beta}\varphi^{(j)}(x)\right|,\quad j=0,\ldots,m\right\}<\infty.
\end{equation}

Now the composite algorithm, introduced in~\cite{dom13}, can be employed for computing ${I^{[0,1]}_{2lk}(\widehat{F})}$. The algorithm employs the classical graded mesh
\begin{equation}\label{eq:gradmesh}
\Pi_{M,q}:=\left\{x_j:=\left(\frac{j}{M}\right)^q :\quad j=0,1,\ldots,M\right\},
\end{equation}
for some parameter ${q>1}$ sufficiently large. Then in each panel ${[x_{j-1},x_j]}$, the (${N+1}$)-point FCC rule is applied.

Our proposed algorithm here is almost the same. The only difference is that we employ the (${N+1}$)-point MFCC rule in each panel. In the following, we show how the (${N+1}$)-point MFCC rule may be employed, and how it affects the total error of the algorithm.
\subsection{The composite MFCC rules on graded meshes}
Divide the integration interval $[0,1]$ by the graded mesh $\Pi_{M,q}$ into $M$ panel and in each panel use the (${N+1}$)-point MFCC rule~\eqref{eq:mfcc} with ${N'=N}$ and ${u_j=t_{N-j,N}}$, \ie ${N+1}$ Clenshaw-Curtis points. As it is seen, the only difference of the algorithm here from Algorithm~I lies in the type of the partitioning ${0=x_0<\cdots<x_M=1}$: Uniform meshes for Algorithm~I and graded meshes for the current algorithm. The following theorem provides an error bound for the composite MFCC rules on the graded meshes.
\begin{theorem}\label{thm:staterr}
Let $f$ and $g$ be so smooth that $\widehat{F}\in C^{N+1}_{\beta}[0,1]$, and let $g$ has a single stationary point at $0$ of order $n\ge1$. Let ${0\le r< 1+\beta}$ with ${\beta=-n/(n+1)}$ and choose
\begin{equation}\label{eq:q}
q>(N+1-r)/(\beta+1-r).
\end{equation}
Then, the composite MFCC rule on the graded mesh~\eqref{eq:gradmesh} has the following error bound provided that $M$ is large enough:
\begin{equation}\label{eq:errgrad}
Ck^{-r}M^{-N-1+r}\|\widehat{F}\|_{N+1,\beta}+C'k^{-1}\sqrt{N}M^{-N}
\end{equation}
where $C$ and $C'$ are constants independent of $k$ and $M$.
\end{theorem}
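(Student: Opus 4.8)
The plan is to follow the proof of Theorem~\ref{thm:generr} panel by panel, the present rule differing from Algorithm~I only in that the uniform mesh is replaced by the graded mesh~\eqref{eq:gradmesh}. After the reduction described above I am computing $I^{[0,1]}_{\widetilde k}(\widehat F)$ with $\widetilde k=lk$ and $\widehat F\in C^{N+1}_\beta[0,1]$, singular only at the left endpoint, by a composite rule on $\Pi_{M,q}$. First I would split the total error as
\[
\Big|I^{[0,1]}_{\widetilde k}(\widehat F)-\text{(composite MFCC)}\Big|\le E_{\mathrm{FCC}}+E_{\mathrm{int}},
\]
where $E_{\mathrm{FCC}}$ is the error of the composite FCC rule that uses the \emph{exact} panel values of $\widehat F$, and $E_{\mathrm{int}}=\sum_{j=1}^{M}|\mathcal{D}_j|$ collects the per-panel interpolation defects $\mathcal{D}_j$ produced by replacing those values with their barycentric surrogates.

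The term $E_{\mathrm{FCC}}$ is exactly the quantity controlled in~\cite{dom13}: applying the per-panel estimate (Theorem~2.6 of~\cite{dom13}) on each $[x_{j-1},x_j]$, bounding the local $H^{N+1}_w$ seminorm of $\widehat F$ by $\|\widehat F\|_{N+1,\beta}\,x_{j-1}^{\beta-N-1}$, and summing $k^{-r}h_j^{N+2-r}x_{j-1}^{\beta-N-1}$ against $x_j=(j/M)^q$, $h_j\le q\,j^{q-1}M^{-q}$, one finds that the graded sum telescopes to $M^{-N-1+r}$ \emph{precisely} when the grading exponent obeys~\eqref{eq:q}; this yields the first term $Ck^{-r}M^{-N-1+r}\|\widehat F\|_{N+1,\beta}$ of~\eqref{eq:errgrad}. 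I would simply quote this computation.

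For $E_{\mathrm{int}}$ I would reuse the mechanism of Theorem~\ref{thm:generr}. Rescaling $[x_{j-1},x_j]$ to $[-1,1]$ produces a local half-length $l_j=h_j/2$ and a local wavenumber $\kappa_j=\widetilde k\,l_j$, and the matrix identity~\eqref{eq:wcdelta} together with Cauchy--Schwarz gives $|\mathcal{D}_j|\le l_j\,\|\boldchr{\omega}_N(\kappa_j)\|_2\cdot 2\,e_j$, where $e_j:=\sup_{x\in[-1,1]}|(\widetilde F-\widetilde Q_N\widetilde F)(x)|$ and $\widetilde F$ is the panel rescaling of $\widehat F$. Since $|I_{\kappa_j}(T_n)|=\mathcal{O}(\min(1,\kappa_j^{-1}))$, one has $l_j\|\boldchr{\omega}_N(\kappa_j)\|_2\le C\sqrt N\min(l_j,k^{-1})$, while the Lagrange bound combined with the $C^{N+1}_\beta$ structure gives $e_j\le C\,h_j^{N+1}x_{j-1}^{\beta-N-1}\|\widehat F\|_{N+1,\beta}$ on every panel away from the endpoint. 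Inserting the graded-mesh relations and splitting the panels at $\kappa_j=1$, the far panels (where $\min(l_j,k^{-1})=k^{-1}$) contribute $\sum k^{-1}\sqrt N\,e_j$, whereas the near panels (where $\min(l_j,k^{-1})=l_j$) contribute an FCC-type sum $\sum \sqrt N\,h_j^{N+2}x_{j-1}^{\beta-N-1}\|\widehat F\|_{N+1,\beta}$ of the same shape as in the previous paragraph with $r=0$. Using~\eqref{eq:q} to collapse each graded sum, both are bounded by $C'k^{-1}\sqrt N\,M^{-N}$, the second term of~\eqref{eq:errgrad}.

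The hard part will be the first panel $[0,x_1]$, where $x_0=0$, $\widehat F$ is genuinely unbounded, and the derivative bound $x_{j-1}^{\beta-N-1}$ degenerates, so neither the Lagrange estimate above nor interpolation through a node sitting at the singularity is available. There I would abandon the derivative bound and control the panel contribution directly through the integrable singularity, $\int_0^{x_1}|\widehat F|\le\|\widehat F\|_{N+1,\beta}\,x_1^{\beta+1}/(\beta+1)$ with $x_1=M^{-q}$, carrying along the oscillatory factor $\min(l_1,k^{-1})$; since~\eqref{eq:q} forces $q(\beta+1)>N+1-r\ge N$, one has $x_1^{\beta+1}=M^{-q(\beta+1)}\lesssim M^{-N}$, so this contribution folds into the second term. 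The genuinely delicate point throughout is that on the panels crowding the stationary point the interpolation error $e_j$ \emph{grows}, and it is only the small local half-length $l_j$ (equivalently the small local wavenumber $\kappa_j$) that compensates it; verifying this balance and confirming that the grading exponent~\eqref{eq:q} makes every partial sum collapse to the advertised order is the crux, the remainder being the bookkeeping already carried out for Algorithm~I.
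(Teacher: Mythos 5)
Your decomposition---composite FCC error plus $k^{-1}\sqrt{N}$ times the summed per-panel interpolation defects, with the FCC part quoted from Theorem~3.6 of \cite{dom13}---is exactly the paper's skeleton, and your first term agrees with the paper's. Where you genuinely diverge is the interpolation term. The paper disposes of it in one stroke: it invokes \cite{ric69} and \cite[\S 8.3]{atk97} to assert that the interpolation error on every panel is uniformly $\mathcal{O}(M^{-N-1})$, observes that the first panel contributes no interpolation error because the rule replaces the integral there by $0$ (Eq.~(3.5) of \cite{dom13}), and multiplies by $(M-1)$ panels and $C'k^{-1}\sqrt{N}$, giving $C'k^{-1}\sqrt{N}(M-1)M^{-N-1}$. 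You instead retain the local bounds $e_j\lesssim h_j^{N+1}x_{j-1}^{\beta-N-1}\|\widehat F\|_{N+1,\beta}$, which degrade toward the singularity, and compensate with the factor $\min(l_j,k^{-1})$ extracted from the panel-local weight vector, splitting the mesh at $\kappa_j=1$ and handling the first panel through the integrable singularity. Your finer route actually engages a point the paper's citation glosses over: the Rice/Atkinson uniform graded-mesh bounds are formulated for functions of type $C^m_\alpha$ with $\alpha>0$ (continuous at the endpoint, derivatives singular), whereas here $\beta<0$ and $\widehat F$ itself blows up like $x^\beta$, so on the panels adjacent to $x=0$ the sup interpolation error behaves like $(j/M)^{q\beta-(N+1)}M^{-(N+1)}$ and is \emph{not} uniformly $\mathcal{O}(M^{-N-1})$; your compensation by the small local half-length $l_j$ (equivalently small $\kappa_j$) is precisely the mechanism needed to control those panels, so in this respect your argument is more robust than the one printed in the paper. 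Two caveats about your sketch, neither fatal to \eqref{eq:errgrad}: first, the near-panel and first-panel contributions most naturally collapse, under \eqref{eq:q}, into the shape of the \emph{first} term $Ck^{-r}M^{-N-1+r}$ rather than the second---in particular your first-panel bound $x_1^{\beta+1}\lesssim M^{-q(\beta+1)}$ carries no $k^{-1}$ factor, and in the paper's accounting this error is already contained in the composite FCC estimate of \cite{dom13} and should not be counted a second time; second, your claim that both graded sums collapse to $C'k^{-1}\sqrt{N}M^{-N}$ is asserted rather than verified, and the exponent bookkeeping in the two regimes $k\le M^{q}$ and $k>M^{q}$ still needs to be written out before the balance you correctly identify as the crux can be certified.
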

\begin{proof}
Theorem~\ref{thm:generr} implies that the total error of a composite MFCC rule, is the sum of the error of the composite FCC rule and the sum over all the panels of the interpolation maximum errors multiplied by $C'k^{-1}\sqrt{N}$. The error of the composite FCC rule is estimated by Theorem~3.6 of~\cite{dom13} as
\begin{equation}\label{err:compfcc}
Ck^{-r}M^{-N-1+r}\|\widehat{F}\|_{N+1,\beta}.
\end{equation}

The largest panel of $\Pi_{M,q}$, i.e. the last one, is ${[(1-1/M)^q,1]}$, and it is rather small only if $M$ is large enough. For the second term of the error, the results in~\cite{ric69} or~\cite[\S8.3]{atk97}, and the inequalities~\eqref{eq:ineqleb} for small subintervals imply that the maximum interpolation error in each panel is of order ${\mathcal{O}\left(M^{-N-1}\right)}$. Note that, since ${\beta<0}$, the integration over the first panel ${[x_0,x_1]}$ is simply approximated by 0 (see Eq.~(3.5) in~\cite{dom13}), so the interpolation error in the first panel vanishes. Thus, the second term of the bound for the total error can be written as
\begin{equation}
C'k^{-1}\sqrt{N}(M-1)M^{-N-1},
\end{equation}
and this with~\eqref{err:compfcc} yields the result.
\end{proof}
\subsection{Numerical experiments}
We try to illustrate the error estimate~\eqref{eq:errgrad} by a set of numerical experiments. Consider the following sample integral
\begin{equation}\label{eq:sampint}
\int_{0}^1\frac{x-1}{1+x^2}\exp\left(\im k x^4\right)\,\df x,
\end{equation}
with some $k>0$. Clearly 0 is the only stationary point of the oscillator function, and it is of order 3. So, ${\beta=-3/4}$. Also, the amplitude and the oscillator functions are so smooth that $\widehat{F}\in C^{N+1}_{\beta}[0,1]$ for any $N>0$. Thus, Theorem~\ref{thm:staterr} is applicable.

\begin{table}
\renewcommand{\arraystretch}{1.4}
\caption{Application of the composite MFCC rules with the graded meshes $\Pi_{M,q}$ to the integral~\eqref{eq:sampint} with ${k=1000}$: Absolute error with its decaying rate.\label{tbl:grad}}
\begin{center} \footnotesize
\begin{tabular}{c*{8}{c}}
\cline{2-9}
& \multicolumn{2}{c}{$N=2$} & \multicolumn{2}{c}{$N=4$} & \multicolumn{2}{c}{$N=6$} & \multicolumn{2}{c}{$N=8$} \\
\cline{1-9}
$M$& error & rate & error & rate & error & rate & error & rate \\
\cline{1-9}
32  & $9.89\times10^{-3}$ &    --- & $3.27\times10^{-2}$ &    ---   & $2.95\times10^{-1}$ &    ---   & $1.50\times10^{2}$ &    ---  \\
64  & $5.99\times10^{-4}$ &    4.0 & $8.80\times10^{-5}$ &    8.5   & $3.02\times10^{-4}$ &    9.9   & $2.81\times10^{-3}$ &   15.7  \\
128 & $5.35\times10^{-5}$ &    3.5 & $8.25\times10^{-6}$ &    3.4   & $2.78\times10^{-6}$ &    6.8   & $1.06\times10^{-6}$ &   11.4  \\
256 & $4.77\times10^{-6}$ &    3.5 & $1.15\times10^{-7}$ &    6.2   & $1.16\times10^{-8}$ &    7.9   & $1.17\times10^{-9}$ &    9.8  \\
512 & $1.99\times10^{-6}$ &    1.3 & $6.45\times10^{-9}$ &    4.2   & $2.62\times10^{-11}$ &    8.8   & $6.05\times10^{-13}$ &   10.9  \\
\cline{1-9}
\end{tabular}
\end{center}
\end{table}
\paragraph{Experiment~1}
If $M$ increases while the other parameters are kept fixed, Theorem~\ref{thm:staterr} implies that the convergence rate is at least of order $\mathcal{O}(M^{-N+1})$. In order to illustrate this result, consider the sample integral~\eqref{eq:sampint} with $k=1000$. We employ the composite MFCC rule on the graded mesh~\eqref{eq:gradmesh} with ${q=\lfloor(N+1)/(\beta+1)\rfloor+1}$ for some $N$ and $M$. By $\lfloor x\rfloor$ we mean the largest integer not greater than $x$. Then, the condition~\eqref{eq:q} is satisfied. The absolute error with its decaying rate, as $M$ grows, is given in Table~\ref{tbl:grad}.

\begin{figure}
\centerline{\includegraphics[width=\textwidth]{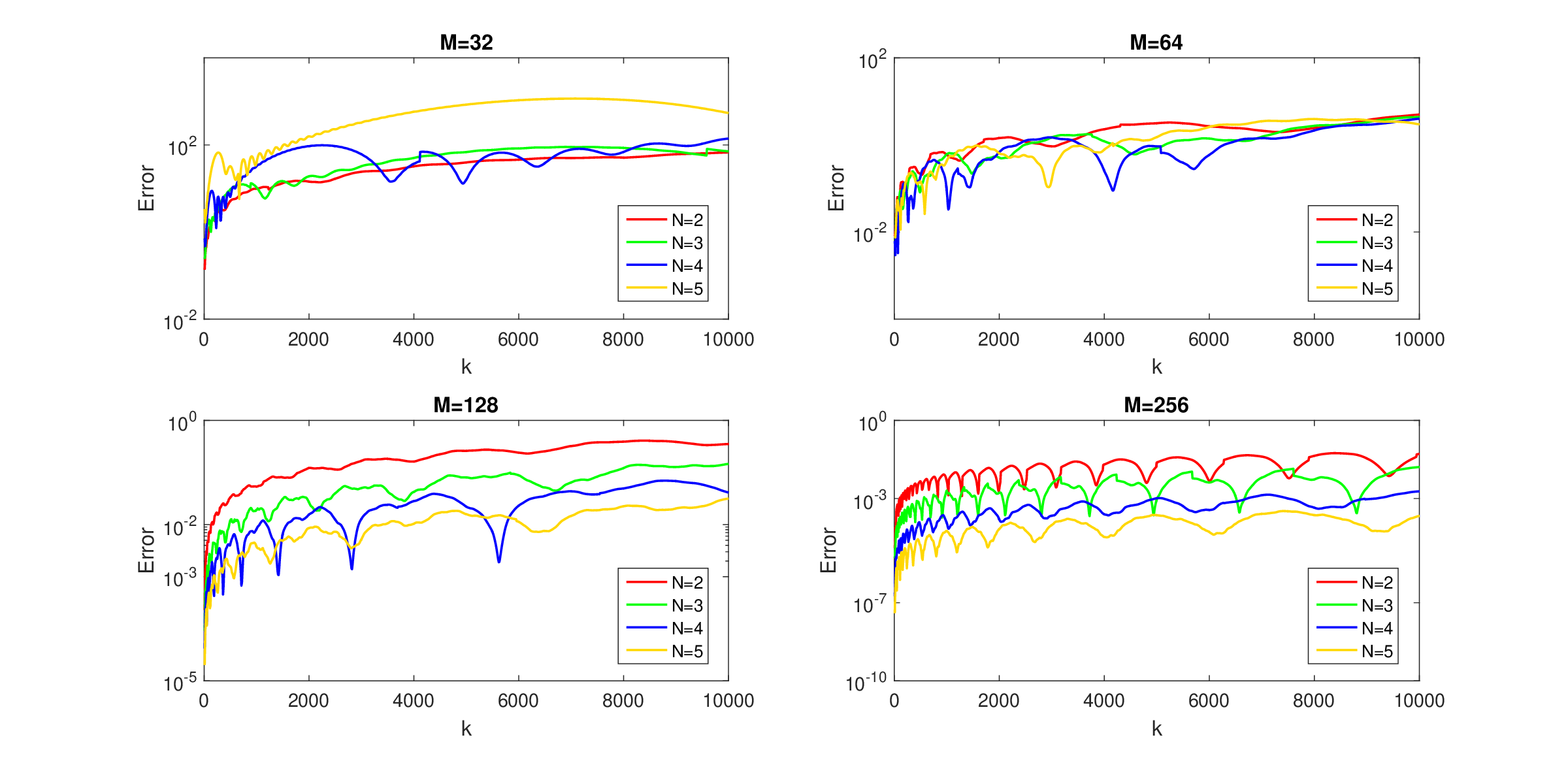}}
\caption{\small Absolute error scaled by $k$ as a function of $k$ for the composite MFCC rule on the graded mesh~\eqref{eq:gradmesh} with $q=\lfloor(N+1)/(\beta+1)\rfloor+1$ when applied on the model integral~\eqref{eq:sampint}. \label{fig:grad}}
\end{figure}
\paragraph{Experiment~2}
If $k\to\infty$ and other parameters remain fixed, the error estimate~\eqref{eq:errgrad} implies that the rule converges with the rate of $\mathcal{O}(k^{-1})$. In order to illustrate this rate, we again consider the sample integral~\eqref{eq:sampint} with $k$ varying in a wide band from 10 to $10^4$. For $M=32,64,128,256$ and $N=2,3,4,5$, we employ the composite MFCC rule with the graded mesh~\eqref{eq:gradmesh} and ${q=\lfloor(N+1)/(\beta+1)\rfloor+1}$. In Figure~\ref{fig:grad}, for each pair ${(M,N)}$, the absolute error scaled by $k$ has been plotted as a function of $k$.
\section{Conclusions}
We have introduced a general method, which can generate various modifications of the FCC rules and their composite versions. The modified rules can be applied to the oscillatory integral~\eqref{eq:intfg}, while they do not deal with the inverse of $g$. The main tool that allows us to design such modifications is interpolation; any kind of the modifications corresponds with a certain interpolation method.

When $g$ has no stationary points in $[a,b]$, two algorithms based on MFCC rules have been introduced. For each one, an error estimate has been given theoretically, and then illustrated by some numerical experiments. Overall, we found both the algorithms reliable and efficient for any regular integral~\eqref{eq:intfg} where $g'(x)\neq0$, $x\in[a,b]$.

When $g$ has a finite number of stationary points in $[a,b]$, it is possible to develop composite rules based on MFCC rules and graded meshes. Similar to the previous case, an error estimate has been given theoretically, and then illustrated by some numerical experiments. Such rules are reliable only for finely graded meshes. Especially, when grading parameter $q$ in $\Pi_{M,q}$ is rather large, the number $M$ of panels should be large enough. Thus, such rules are not recommended when $\beta$ is too close to $-1$ because in this case the panels in a narrow neighborhood of $0$ becomes too small. Then, evaluating functions at the interpolation points in such panels may result in large rounding error. This is the same effect which happens for the composite FCC rules (see~\S4.1 of~\cite{dom13}).
\bibliographystyle{siam}
\bibliography{ref}
\end{document}